\documentclass[preprint,review,10pt]{amsart}
\usepackage{amsmath,amssymb,amsfonts,xspace,mathrsfs}
\newtheorem{theorem}{Theorem}[section]

\theoremstyle{definition}
\newtheorem{definition}[theorem]{Definition}
\newtheorem{example}[theorem]{Example}

\newtheorem{proposition}[theorem]{Proposition}
\newtheorem{corollary}[theorem]{Corollary}

\theoremstyle{remark}

\numberwithin{equation}{section}

\begin{document}

\title{weakly $Q$-ideals of commutative rings}

%    Information for first author
\author{Mahdi Anbarloei}
%    Address of record for the research reported here
\address{Department of Mathematics, Faculty of Sciences,
Imam Khomeini International University, Qazvin, Iran.
}
%    Current address

\email{m.anbarloei@sci.ikiu.ac.ir}
%    \thanks will become a 1st page footnote.

%    Information for second author
%\author{}
%\address{}
%\email{}
%\thanks{Support information for the second author.}

%    General info
\subjclass[2020]{ 13A15, 13E99  }

%\date{September  , 2013.}

%\dedicatory{This paper is dedicated to our advisors.}
\keywords{ weakly prime ideal,  weakly primary ideal, weakly $Q$-ideal}
%------------------------------------------------------------------------------
%%%%%%%%%%%%%%%%%%%%%%%%%%%%%%%%%%%%%%%%%%%%%%%%%%%%%%%%%%%%%%%%%%%%%%%%%%%%%%%%%%%%%%%%%%%%%%%%%%%%%%

%%%%%%%%%%%%%%%%%%%%%%%%%%%%%%%%%%%%%%%%%%%%%%%%%%%%%%%%%%%%%%%%%%%%%%%%%%%%%%%%%%%%%%%%%%%%%%%%%%%%%%%%%%%%%%%%%%%%%%%%%%%%%%%%%%%%%%%%%
\begin{abstract}
 In this paper, we introduce and study the notion of  weakly $Q$-ideals in  commutative rings. 

\end{abstract}
%%%%%%%%%%%%%%%%%%%%%%%%%%%%%%%%%%%%%%%%%%%%%%%%%%%%%%%%%%%%%%%%%%%%%%%%%%%%%%%%%
\maketitle

\section{Introduction} 
Throughout this paper, we assume  that all rings are commutative with identity. Let $A$ be a commutative ring and $I$ be an ideal of $A$. We will denote by $Z(A)$, $\text{U}(A)$ and $\text{Max}(A)$, the set of zero-divisor elements, unit elements and maximal ideals of $A$, respectively. Moreover, $\sqrt{I}$ denotes the radical of $I$. If $\sqrt{I}=I$, then $I$ is called a radical ideal.

The development of various classifications of ideals has provided mathematicians with deeper insights into the fundamental nature of ring structures. The important role of prime ideals within commutative ring theory has motivated a wealth of generalizations and diverse research trajectories. The significance of some generalizations is on par with  prime ideals. A Seminal turning point in this direction was established by Anderson and  Smith, who introduced the notion of weakly prime ideals, paving the way for a more profound analysis of structural properties where the product of elements vanishes.   This paradigm shift demonstrated  that distinguishing between   zero products and  non-zero expressions can unveil rich structural features of  rings. A proper $I$ of $A$ is a weakly prime ideal if $0 \neq xy \in I$ for $x,y \in A$, then $x \in I$ or $y \in I$ \cite{Anderson2}. Later, Atani and Farzalipour presented the idea of weakly primary ideals, that is a proper ideal $I$ of $A$ such that for all $x,y \in A$ with $0 \neq xy \in I$, $x \in I$ or $y \in \sqrt{I}$ \cite{Atani}. Khashan and Celikel introduced the concept of  weakly $J$-ideals via the Jacobson radical  of $A$, denoted by $J(A)$ in \cite{Khashan}. They said that a proper ideal $I$ of $A$ is a weakly $J$-ideal  if whenever $x,y \in A$ such that $0 \neq xy \in I$ and $x \notin J(A)$, then $y \in I$. In 2025, by utilizing  the nilradical  of $A$   denoted by $\text{N}(A)$, Ersoy et al. proposed the notion of weakly $n$-ideals. A proper ideal $I$ of $A$ is a weakly $n$-ideal if whenever $0 \neq xy \in I$ for $x,y \in A$,  then either $x \in I$ or $y \in \text{N}(A)$ \cite{Ersoy}.

The notion of $Q$-ideals was introduced by Mimouni, utilizing a fixed ideal $Q$ of $A$.  \cite{Mimouni}. Call a proper ideal $I$ of $A$ a $Q$-ideal if $xy \in I$ where $x,y \in A$ and $x \notin Q$ imply $y \in I$. Our objective in this paper is to introduce and investigate  the notions of weakly $Q$-ideals, which represents a   attempt to merging the concepts  of weakly prime  ideals , weakly primary ideals, weakly $J$-ideals and weakly $n$-ideals in one framework.

%We investigate how this concept behaves under standard ring-theoretic constructions, including localization via multiplicative subsets, the formation of direct products, and the mechanism of Nagata’s idealization. Furthermore, we demonstrate the utility of this notion by establishing a new variant of Nakayama’s lemma and exploring its interplay with local domains. 

 %The primary objective of this paper is to introduce and rigorously investigate the concept of weakly $Q$-ideals, providing a comprehensive framework for their characterization in commutative rings

%We explore how the condition of being a weakly $Q$-ideal interacts with classical ring-theoretic constructions, including direct products and localization

%One of the significant contributions of this study is the development of a structural equivalence between weakly $Q$-ideals of a ring $A$ and their counterparts in the idealization $A(+)M$

%We establish a series of equivalent characterizations for weakly $Q$-ideals, utilizing quotient ideal properties and internal multiplication conditions

%%%%%%%%%%%%%%%%%%%%%%
\section{weakly $Q$-ideals}
\begin{definition}
Let $I$ be a proper ideal of a commutative ring $A$. We say that $I$ is a weakly $Q$-ideal of $A$ if whenever $x,y \in A$ such that $0 \neq xy \in I$ and $x \notin Q$, then $y \in I$.
\end{definition}
While every $Q$-ideal is a weakly $Q$-ideal, the following example shows that the converse is not true in general.
\begin{example}
Let $A=\mathbb{Z}_{30}$ and $Q=\langle 15 \rangle$. Then, the ideal $I=\langle 0 \rangle$ is a weakly $Q$-ideal of $A$, but it is not $Q$-ideal. Indeed, $6 \cdot 5 \in I$ and $6,5 \notin Q$ but clearly $5,6 \notin I$.
\end{example}
\begin{proposition} \label{1}
Let $I$ and $Q$ be  ideals of $A$. If $I$ is a weakly $Q$-ideal of $A$, then $I \subseteq Q$.
\end{proposition}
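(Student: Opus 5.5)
The natural route is a direct contradiction argument, using the definition of a weakly $Q$-ideal with the unit $1$ as the second factor. Suppose $I$ is a weakly $Q$-ideal of $A$ and, aiming for a contradiction, that $I \not\subseteq Q$. I then fix an element $x \in I$ with $x \notin Q$.

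First I dispose of the zero element. Since $Q$ is an ideal, $0 \in Q$. Hence the chosen $x$ cannot be $0$, so $x \neq 0$.

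Next I apply the defining condition to the pair $(x,1)$. We have $0 \neq x = x\cdot 1 \in I$ and $x \notin Q$. The definition of a weakly $Q$-ideal then forces $1 \in I$, so $I = A$. This contradicts the standing assumption that a weakly $Q$-ideal is proper, and therefore $I \subseteq Q$.

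I do not expect any real obstacle here. The only point needing care is the "weakly" hypothesis $0 \neq xy$, which the definition requires before it can be applied. It is handled because any element outside $Q$ is automatically nonzero, as $0$ lies in every ideal. The argument never uses any special property of $Q$ beyond its being an ideal, or more minimally a subset containing $0$. In particular, the case $Q = A$ is trivial and needs no separate treatment.
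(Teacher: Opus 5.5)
Your proof is correct and matches the paper's: pick $x \in I \setminus Q$, apply the definition to $x \cdot 1$ to get $1 \in I$, and contradict properness. You also state explicitly that $x \neq 0$ because $0 \in Q$, which the paper's version leaves implicit.
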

\begin{proof}
Assume that $I$ is a weakly $Q$-ideal of $A$ such that  $I \nsubseteq Q$. Then, there exists $x \in I$ such that $x \notin Q$. Since $I$ is a weakly $Q$-ideal of $A$, $x \cdot 1 \in I$ and $x \notin Q$, we get $1 \in I$, a contradiction. Thus, $I \subseteq Q$, as required.
\end{proof}
\begin{theorem}\label{2}
Let $I$ be an ideal of $A$. Then, $I$ is a weakly $J$-ideal of $A$ if and only if $I$ is a weakly $M$-ideal for every  $M \in \text{Max}(A)$.
\end{theorem}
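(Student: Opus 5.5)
The plan is to compare the two definitions directly: a weakly $J$-ideal in the sense of \cite{Khashan} is exactly a weakly $Q$-ideal with $Q=J(A)$. Both directions then reduce to the identity $J(A)=\bigcap_{M\in \text{Max}(A)} M$. Properness of $I$ is part of both definitions, so it carries over automatically in each direction.

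For the forward direction, assume $I$ is a weakly $J$-ideal and fix $M\in\text{Max}(A)$. Take $x,y\in A$ with $0\neq xy\in I$ and $x\notin M$. Since $J(A)\subseteq M$, we have $x\notin J(A)$. The weakly $J$-ideal property then gives $y\in I$, so $I$ is a weakly $M$-ideal.

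For the converse, assume $I$ is a weakly $M$-ideal for every $M\in\text{Max}(A)$. Take $x,y\in A$ with $0\neq xy\in I$ and $x\notin J(A)$. Because $J(A)$ is the intersection of all maximal ideals, there is some $M_0\in\text{Max}(A)$ with $x\notin M_0$. Applying the weakly $M_0$-ideal property yields $y\in I$, so $I$ is a weakly $J$-ideal.

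I do not expect any real obstacle here. The only point needing care is that the converse uses a single maximal ideal avoiding $x$, and such an $M_0$ exists precisely by the characterization of the Jacobson radical as $\bigcap_{M\in \text{Max}(A)} M$. Proposition \ref{1} is not needed, although it is consistent with the result: it gives $I\subseteq M$ for every $M$, and hence $I\subseteq J(A)$.
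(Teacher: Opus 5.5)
Your proposal is correct and follows the paper's proof essentially step for step. It uses $J(A)\subseteq M$ for the forward direction and the existence of some $M_0\in\text{Max}(A)$ with $x\notin M_0$ for the converse. The only difference is that the paper's converse also invokes Proposition \ref{1} to get $I\subseteq J(A)$; as you observe, that fact is never used in the argument.
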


\begin{proof}
Let $I$ be a weakly $J$-ideal of $A$ and $M \in \text{Max}(A)$. Assume that $0 \neq xy \in I$ and $x \notin M$. This implies that $x \notin \text{J}(A)$. By the hypothesis, we conclude that $y \in I$. Thus, $I$ is a weakly $M$-ideal of $A$. Conversely, let $I$ be a weakly $M$-ideal for every  $M \in \text{Max}(A)$. By Proposition \ref{1}, $I \subseteq M$ for every $M \in \text{Max}(A)$ which means $\text{J}(A)$ contains $I$. Now, assume that $0 \neq xy \in I$ and $x \notin \text{J}(A)$. This implies that $x \notin M$ for some $M \in \text{Max}(A)$. Hence, $y \in I$ as $I$ is a weakly $M$-ideal of $A$. Consequently, $I$ is a weakly $J$-ideal of $A$.
\end{proof}
\begin{theorem}\label{3}
Let $A$ be a domain.  Then, the following statements are equivalent:
\begin{enumerate}
\item $A$ is a  local ring.
\item  Every  proper ideal $I$ of $A$  is a weakly  $M$-ideal for every maximal ideal $M$ containing $I$.
\item  Every proper principle ideal $I$ of $A$  is a weakly  $M$-ideal for every maximal ideal $M$ containing $I$.
\end{enumerate}    
\end{theorem}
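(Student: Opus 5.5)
I will prove the cycle (1)$\Rightarrow$(2)$\Rightarrow$(3)$\Rightarrow$(1). The implication (2)$\Rightarrow$(3) is trivial, since a proper principal ideal is in particular a proper ideal.

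For (1)$\Rightarrow$(2), let $A$ be local with unique maximal ideal $M$, and let $I$ be a proper ideal, so $I\subseteq M$. Suppose $0\neq xy\in I$ with $x\notin M$. Since $A$ is local, every element outside $M$ is a unit, so $x\in \text{U}(A)$ and $y=x^{-1}(xy)\in I$. Hence $I$ is a weakly $M$-ideal (indeed even an $M$-ideal). The domain hypothesis is not needed in this direction.

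The main step is (3)$\Rightarrow$(1), and here the domain hypothesis does the work. Suppose $A$ is not local, so there are distinct maximal ideals $M_1\neq M_2$. Pick $a\in M_1\setminus M_2$ and $b\in M_2\setminus M_1$; then $b\neq 0$. Consider the proper principal ideal $I=\langle ab\rangle$, which lies in $M_1$. Since $A$ is a domain, $ab\neq 0$, and $0\neq b\cdot a\in I$ with $b\notin M_1$. By (3), $I$ is a weakly $M_1$-ideal, so $a\in\langle ab\rangle$, i.e.\ $a=abr$ for some $r\in A$. Cancelling $a\neq 0$ (domain again) gives $1=br$, so $b$ is a unit. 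This contradicts $b\in M_2$. Hence $A$ has exactly one maximal ideal and is local.

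The only delicate point is ensuring the chosen product is nonzero, so that the weak condition applies, and that cancellation is valid. Both follow from $A$ being a domain together with $a\neq 0$, which holds because $a\notin M_2$.
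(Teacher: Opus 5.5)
Your proof is correct and is essentially the paper's argument. For (3)$\Rightarrow$(1) both take the principal ideal generated by the nonzero product of an element outside a maximal ideal with a nonzero element inside it, apply the weak $M$-ideal condition, and cancel in the domain to get a unit. The only difference is cosmetic: you argue by contradiction from two distinct maximal ideals, while the paper fixes one maximal ideal $M$ and shows directly that every element outside it is a unit. The paper's version tacitly needs some $0\neq y\in M$, i.e. $M\neq 0$; your formulation gets the required nonzero elements automatically.
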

\begin{proof}
(1) $\Longrightarrow$ (2)   Let   $I$ be a proper ideal of $A$ and $M$ be the unique maximal ideal of $A$. Assume that  $0 \neq xy \in I$ for $x,y \in A$ and $x \notin M$. This implies that $x \in \text{U}(A)$ which means $x^{-1}(xy) \in I$. Thus, $I$ is a weakly  $M$-ideal of $A$.

(2) $\Longrightarrow$ (3) Clear.

(3) $\Longrightarrow$ (1) Let every proper principle ideal $I$ of $A$  be a weakly  $M$-ideal for every maximal ideal $M$ containing $I$. Assume that $M \in \text{Max}(A)$. We show that every $x \notin M$ is a unit. Take any $x \in A \backslash M$ and $0 \neq y \in M$. Since $A$ is a domain, we have $xy \neq 0$. Put $I=\langle xy \rangle$. Since $I$ is a weakly $M$-ideal of $A$, $0 \neq xy \in I$ and $x \notin M$, we get $y \in I=\langle xy \rangle$. Then, $y=rxy$ and so $rx=1$ as $A$ is a domain. This means $x \in \text{U}(A)$. Consequently, $A$ is local.
\end{proof}
\begin{theorem}\label{4}
Let $I$ and $Q$ be  ideals  of $A$. Then, the following statements are equivalent:
\begin{enumerate}
\item $I$ is a weakly $Q$-ideal of $A$.
\item $(I:x)=I \cup (0:x)$ for each $x \in A \backslash  Q$.
\item $(I:x) \subseteq Q \cup (0:x)$ for each $x \in A \backslash I$.
\item If $x \in A$ and $I_1$ is an ideal of $A$ with $0 \neq I_1x\subseteq I$, then $I_1 \subseteq Q$ or $x \in I$.
\item If $I_1$ and $I_2$ are ideals of $A$ with $0 \neq I_1I_2 \subseteq I$, then $I_1 \subseteq Q$ or $I_2 \subseteq I$.
\end{enumerate}
\end{theorem}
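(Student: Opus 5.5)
The plan is to prove the cycle (1)$\Rightarrow$(2)$\Rightarrow$(3)$\Rightarrow$(1), and then the chain (1)$\Rightarrow$(4)$\Rightarrow$(5)$\Rightarrow$(1). Everything works directly from the definition, using only elementwise arguments. The one genuinely non-routine point is passing from elements to ideals in (4) and (5), which needs an additivity trick.

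\textbf{The cycle (1)$\Rightarrow$(2)$\Rightarrow$(3)$\Rightarrow$(1).}
For (1)$\Rightarrow$(2), fix $x\notin Q$. The inclusion $I\cup(0:x)\subseteq(I:x)$ always holds. Conversely, let $y\in(I:x)$ with $y\notin(0:x)$. Then $0\neq xy\in I$ and $x\notin Q$, so $y\in I$.

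For (2)$\Rightarrow$(3), fix $x\notin I$ and $y\in(I:x)$ with $y\notin Q\cup(0:x)$. Then $0\neq xy\in I$. Since $y\notin Q$, (2) applied to $y$ gives $x\in(I:y)=I\cup(0:y)$. Here $x\notin I$ and $xy\neq0$, which is a contradiction.

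For (3)$\Rightarrow$(1), suppose $0\neq xy\in I$, $x\notin Q$ and $y\notin I$. By (3) applied to $y$, we get $x\in(I:y)\subseteq Q\cup(0:y)$. But $x\notin Q$ and $xy\neq 0$, which is a contradiction.

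\textbf{The chain (1)$\Rightarrow$(4)$\Rightarrow$(5)$\Rightarrow$(1).}
For (1)$\Rightarrow$(4), suppose $0\neq I_1x\subseteq I$, $I_1\nsubseteq Q$, and pick $a\in I_1\setminus Q$. If $ax\neq0$, then (1) gives $x\in I$ directly. The main obstacle is the case $ax=0$. Here I use the hypothesis $I_1x\neq 0$ to choose $b\in I_1$ with $bx\neq0$. If $b\notin Q$, (1) applied to $b$ gives $x\in I$. If $b\in Q$, then $a+b\in I_1\setminus Q$ and $(a+b)x=bx$, so $0\neq (a+b)x\in I$, and (1) gives $x\in I$.

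For (4)$\Rightarrow$(5), the same trick is used on the other factor. Assume $0\neq I_1I_2\subseteq I$ and $I_1\nsubseteq Q$, and let $c\in I_2$. If $I_1c\neq0$, then (4) gives $c\in I$. Otherwise $I_1c=0$. Since $I_1I_2\neq0$, there is $d\in I_2$ with $I_1d\neq0$, and (4) gives $d\in I$. Moreover $I_1(c+d)=I_1d\neq0$, so (4) gives $c+d\in I$, and hence $c\in I$. Thus $I_2\subseteq I$.

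For (5)$\Rightarrow$(1), let $0\neq xy\in I$ with $x\notin Q$. Take $I_1=\langle x\rangle$ and $I_2=\langle y\rangle$. Then $0\neq I_1I_2\subseteq I$, and $I_1\nsubseteq Q$ because $x\notin Q$. So (5) gives $I_2\subseteq I$, that is, $y\in I$.
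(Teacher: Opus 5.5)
Your proof is correct, but it is organized differently from the paper's. The paper proves $(1)\Leftrightarrow(2)$ and then the single cycle $(1)\Rightarrow(3)\Rightarrow(4)\Rightarrow(5)\Rightarrow(1)$. Your $(1)\Rightarrow(2)$, $(4)\Rightarrow(5)$ (with the same $c+d$ trick) and $(5)\Rightarrow(1)$ coincide with the paper's. Your $(2)\Rightarrow(3)$ is the paper's $(1)\Rightarrow(3)$ in disguise.

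The real difference is how (4) is reached. The paper argues $(3)\Rightarrow(4)$: if $x\notin I$ and $0\neq I_1x\subseteq I$, then $I_1\subseteq(I:x)\neq(0:x)$. From $(I:x)\subseteq Q\cup(0:x)$ it then concludes $(I:x)\subseteq Q$, silently using the fact that an ideal contained in a union of two ideals lies in one of them. You prove $(1)\Rightarrow(4)$ directly. Your replacement of $a$ by $a+b$ (with $a\in I_1\setminus Q$, $ax=0$, $b\in I_1\cap Q$, $bx\neq 0$) is exactly the proof of that union lemma, unrolled in this instance. The price is the extra elementwise step $(3)\Rightarrow(1)$ needed to close your first cycle.

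So the paper's route is shorter and records the useful reformulation ``$(I:x)\subseteq Q$ whenever $x\notin I$ and $(I:x)\neq(0:x)$''. Yours is self-contained, with every step justified on elements. Like the paper, your returns to (1) recover only the implication condition, not properness. Since (2)--(5) hold trivially for $I=A$, properness of $I$ must be a standing hypothesis in both proofs.
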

\begin{proof}
(1) $\Longrightarrow $ (2) Assume that $x \in A \backslash Q$. The inclusion $I \cup (0 : x) \subseteq (I:x)$ holds. Let $y \in (I : x)$. Then, we have $xy \in I$. If $xy \neq 0$, then we get $y \in I$ as $I$ is a weakly $Q$-ideal of $A$. If $xy=0$, then we obtain $y \in (0:x)$ which means $(I:x) \subseteq I \cup (0:x)$. Thus, $(I:x)=I \cup (0:x)$.

(2) $\Longrightarrow $ (1) Assume that $0 \neq xy \in I$ for $x,y \in A$ and $x \notin Q$. By the hypothesis, $y \in I \cup (0 : x)$. Since $y \notin (0:x)$, we have $y \in I$. 

(1) $\Longrightarrow $ (3) By a similar argument to (1) $\Longrightarrow $ (2), we conclude that  the claim is true.

(3) $\Longrightarrow $ (4) Let $0 \neq I_1 x \subseteq I$    for some ideal $I_1$ of $A$  such that $x \in A \backslash I$. Since  $(I : x) \neq (0 : x)$, we obtain  $(I : x) \subseteq Q$ by the hypothesis. Hence, $I_1 \subseteq Q$.

(4) $\Longrightarrow $ (5) Let $0 \neq I_1I_2 \subseteq I$ for some ideals $I_1$ and $I_2$  of $A$ such that neither $I_1 \subseteq Q$ nor $I_2 \subseteq I$. From $0 \neq I_1I_2 \subseteq I$ it follows that $0 \neq I_1 x \subseteq I$ for some $x \in I_2$. This implies that $x \in I$ as $I_1 \nsubseteq Q$. Take any $y \in I_2 \backslash I$. If $I_1 y \neq 0$, then we obtain $y \in I$, a contradiction. Therefore, $I_1 y=0$. Since $0 \neq I_1(x+y) \subseteq I$ and $I_1 \nsubseteq Q$, we obtain $x+y \in I$ and so $y \in I$, a contradiction. 

(5) $\Longrightarrow $ (1) Let $0 \neq xy \in I$ for $x,y \in A$. Set $I_1=\langle x \rangle$ and $I_2=\langle y \rangle$. Then, $0 \neq I_1 I_2 \subseteq I$. By the hypothesis, we conclude that $x \in I_1 \subseteq Q$ or $y \in I_2 \subseteq I$. 
\end{proof}
\begin{proposition}
Let $A$ is a ring. 
\begin{enumerate}
\item If $I$ is a weakly $Q$-ideal   of $A$ for every ideal $Q$ of $A$ properly containing $I$  such that $I$ is not radical, then $I$ is a weakly primary ideal.
\item  If $I$ is a weakly $Q$-ideal of $A$ for every ideal $Q$ of $A$ properly containing $\sqrt{I}$ and $ 0$ is a quasi prime ideal of $A$, then $\sqrt{I}$ is a weakly prime ideal.
\item The following statements are equivalent:
\begin{enumerate}
\item[(i)] $I$ is a weakly primary ideal.
\item[(ii)] $I$ is a weakly $\sqrt{I}$-ideal.
\item[(iii)] $I$ is a weakly $Q$-ideal of $A$ for every ideal $Q$ of $A$ containing $\sqrt{I}$.
\end{enumerate}
\end{enumerate}
\begin{proof}
(1) Assume that  $I$ is not radical. By the assumption, $I$ is a weakly $\sqrt{I}$-ideal of $A$. Then, $(I:x)=I \cup (0:x)$ for each $x \notin A \backslash \sqrt{I}$ by Theorem \ref{4}. Therefore, we conclude that $I$ is   a weakly primary ideal by Proposition 2.1 in \cite{Atani}.

(2) Let $I$ be  a weakly $Q$-ideal of $A$ for every ideal $Q$ of $A$ properly containing $\sqrt{I}$. Assume that $0 \neq xy \in \sqrt{I}$ and $x \notin \sqrt{I}$. This implies that $x^ny^n \in I$ for some $n \in \mathbb{N}$. Let $x^ny^n=0$. Since $0$ is a quasi prime ideal of $A$ and $x^n \notin \sqrt{0}$, we get $y^n \in \sqrt{0}$. This means that $y^{nm}=0$ for some $m \in \mathbb{N}$. Hence, we have $y \in \sqrt{0} \subseteq \sqrt{I}$ which implies  $\sqrt{I}$ is a weakly prime ideal of $A$. If   $x^ny^n \neq 0$, then one can easily complete the proof  by using an argument similar to that in the proof of Theorem 2.3(4)  in \cite{Mimouni}.

(3) (i) $\Longrightarrow$ (ii) and (ii) $\Longrightarrow$ (iii) are obvious.

(iii) $\Longrightarrow$ (i) By the hypothesis, $I$ is a weakly $\sqrt{I}$-ideal. Assume that  $0 \neq xy \in I$ and $x \notin I$. If $y \notin \sqrt{I}$, then $x \in I$, a contradiction. Then,   $y \in \sqrt{I}$  which implies  $I$ is a weakly primary ideal.
\end{proof}
\end{proposition}
\begin{proposition}
Let $Q$ be an   ideal of $A$ and $\{I_i\}_{i \in \Delta}$ be a non-empty family of weakly $Q$-ideals of $A$. Then, $\cap_{i \in \Delta}I_i$ is a weakly $Q$-ideal. 
\end{proposition}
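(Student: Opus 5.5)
The plan is to argue directly from the definition of a weakly $Q$-ideal. Write $I=\cap_{i\in\Delta}I_i$.

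First, $I$ is proper. Since $\Delta$ is non-empty, pick some $i_0\in\Delta$. Then $I\subseteq I_{i_0}\neq A$, so $1\notin I$. This is the only place the non-emptiness hypothesis is used; it is needed because the empty intersection would be $A$.

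Next, check the defining condition. Let $x,y\in A$ with $0\neq xy\in I$ and $x\notin Q$. For each $i\in\Delta$ we have $0\neq xy\in I_i$ and $x\notin Q$. Since every $I_i$ is a weakly $Q$-ideal of $A$, it follows that $y\in I_i$. This holds for every $i$, so $y\in\cap_{i\in\Delta}I_i=I$. Hence $I$ is a weakly $Q$-ideal.

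Alternatively, one could use Theorem \ref{4}(2): for $x\in A\backslash Q$,
\[
(I:x)=\cap_{i}(I_i:x)=\cap_i\bigl(I_i\cup(0:x)\bigr)=I\cup(0:x).
\]
The direct argument is cleaner, so I will use that. I expect no real obstacle beyond remembering to verify properness. The key point is that the same ideal $Q$ serves all $I_i$, so the hypothesis $x\notin Q$ transfers uniformly to each member of the family.
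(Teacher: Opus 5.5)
Your proof is correct and follows the same direct argument from the definition as the paper. You also check that $\cap_{i\in\Delta}I_i$ is proper, using non-emptiness of $\Delta$; the paper's proof omits this step.
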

\begin{proof}
Let $0 \neq xy \in \cap_{i \in \Delta}I_i$ for $x,y \in A$ and $x \notin Q$. Since $ I_i $ is a  weakly $Q$-ideal of $A$ for all $i \in \Delta$, we get $y \in \cap_{i \in \Delta}I_i$. Thus, $\cap_{i \in \Delta}I_i$ is a weakly $Q$-ideal. 
\end{proof}
\begin{proposition}
Let  $I$ and $Q$ be ideals of $A$ and let $B$ be a non-empty subset of $A$ such that $B \nsubseteq I$. If $(0:B)$  and  $I$ are weakly $Q$-ideals of $A$, then $(I:B)$ is a weakly $Q$-ideal.  
\end{proposition}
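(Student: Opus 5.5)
The plan is to verify the definition of a weakly $Q$-ideal directly for $(I:B)$. The hypothesis on $(0:B)$ handles the case where the product annihilates all of $B$. The hypothesis on $I$, together with an elementwise translation trick, handles the remaining case.

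\emph{Properness.} Since $B \nsubseteq I$, there is $b \in B \setminus I$, so $1 \cdot b \notin I$. Hence $1 \notin (I:B)$ and $(I:B)$ is a proper ideal.

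\emph{Main argument.} Let $x,y \in A$ with $0 \neq xy \in (I:B)$ and $x \notin Q$, so that $xyB \subseteq I$. We must show $yB \subseteq I$, and we split according to whether $xyB=0$.

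\emph{Case 1: $xyB = 0$.} Then $0 \neq xy \in (0:B)$ with $x \notin Q$. Since $(0:B)$ is a weakly $Q$-ideal, we get $y \in (0:B)$. Thus $yB = 0 \subseteq I$ and $y \in (I:B)$.

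\emph{Case 2: $xyb_0 \neq 0$ for some $b_0 \in B$.} Then $0 \neq x(yb_0) \in I$ and $x \notin Q$, so $yb_0 \in I$ because $I$ is a weakly $Q$-ideal. Now take an arbitrary $b \in B$.
\begin{itemize}
\item If $xyb \neq 0$, the same reasoning gives $yb \in I$.
\item If $xyb = 0$, we cannot apply the weak condition to $b$ directly; this is the main obstacle. We shift by $b_0$ and look at the element $y(b+b_0) \in A$. This element need not be of the form $yb'$ with $b' \in B$, but that does not matter, since the weak condition applies to any element of $A$. We have $x \cdot y(b+b_0) = xyb + xyb_0 = xyb_0$, which is nonzero and lies in $I$. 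Since $x \notin Q$, it follows that $y(b+b_0) \in I$. Subtracting $yb_0 \in I$ gives $yb \in I$.
\end{itemize}
Hence $yB \subseteq I$, that is, $y \in (I:B)$.

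Combining the two cases, $(I:B)$ is a weakly $Q$-ideal of $A$.
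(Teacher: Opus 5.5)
Your proof is correct and follows the paper's argument: the same case split into $xyB=0$ (handled by the hypothesis on $(0:B)$) and $xyB\neq 0$ (handled by the hypothesis on $I$). The only difference is that where the paper cites Theorem 4 to go from $0\neq xyB\subseteq I$ to $yB\subseteq I$, you prove this step directly with the shift $b\mapsto b+b_0$, which is the same trick the paper uses in its proof of (4)$\Rightarrow$(5) of that theorem.
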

\begin{proof}
Let $I$  be an ideal  of $A$ and   $B$ be a non-empty subset of $A$ where $B \nsubseteq I$. Clearly, $(I:B)$ is a proper ideal of $A$. Assume that $0 \neq xy \in (I:B)$ for $x,y \in A$ and $x \notin Q$. So, we have $xyB \subseteq I$. Let  $xyB=0$. Since  $(0:B)$  is a weakly $Q$-ideal  of $A$ and $0 \neq xy \in (0:B)$, we get $y \in (0:B)$ and so $y \in (I:B)$. Let $xyB \neq 0$.    By Theorem \ref{4}, we obtain $yB \subseteq   I$ as $I$ is a  weakly $Q$-ideals of $A$. This implies that $y \in (I:B)$. Hence, $(I:B)$ is a weakly $Q$-ideal.
\end{proof}
\begin{theorem} \label{5}
Let $I$ and $Q$ be  ideals  of $A$.  If $I$ is a weakly $Q$-ideal of $A$ that is not $Q$-ideal, then $I^2=0$.
\end{theorem}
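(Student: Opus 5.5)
The argument is the standard Anderson--Smith one for weakly prime ideals, adapted to the pair $(I,Q)$. Since $I$ is a weakly $Q$-ideal but not a $Q$-ideal, there exist $a,b \in A$ with $ab \in I$, $a \notin Q$ and $b \notin I$. The weak condition forces $ab=0$, because otherwise $0 \neq ab \in I$ with $a \notin Q$ would give $b \in I$. We fix such a pair $(a,b)$, call it a $Q$-twin-zero of $I$, and show $I^2=0$ in three steps.

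\emph{Step 1: $aI=0$.} Suppose $ai \neq 0$ for some $i \in I$. Then $a(b+i)=ai$ is a nonzero element of $I$ and $a \notin Q$, so $b+i \in I$. This gives $b \in I$, a contradiction.

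\emph{Step 2: $bI=0$.} Suppose $bj \neq 0$ for some $j \in I$. By Proposition~\ref{1}, $I \subseteq Q$, so $j \in Q$ and therefore $a+j \notin Q$. Moreover $(a+j)b = ab + jb = jb$, which is a nonzero element of $I$. The weakly $Q$-ideal property then gives $b \in I$, a contradiction.

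\emph{Step 3: $ij=0$ for all $i,j \in I$.} Suppose $ij \neq 0$. Expand
\[(a+i)(b+j)=ab+aj+ib+ij = ij,\]
using $ab=0$, $aj=0$ (Step 1) and $ib=0$ (Step 2). So $(a+i)(b+j)$ is a nonzero element of $I$. Since $i \in I \subseteq Q$, we have $a+i \notin Q$, and the weak property yields $b+j \in I$, hence $b \in I$, a contradiction. Therefore $ij=0$ for all $i,j\in I$. As $I^2$ is generated by such products, $I^2=0$.

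The main point is the repeated use of Proposition~\ref{1}, namely $I\subseteq Q$, to guarantee that $a$ plus an element of $I$ still lies outside $Q$. This is the only place where the argument departs from the weakly prime case, and it is what makes Steps 2 and 3 work. The remaining steps are routine.
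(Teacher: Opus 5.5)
Your proof is correct and takes essentially the same approach as the paper. The paper argues contrapositively: it assumes $I^2 \neq 0$ and shows $I$ is a $Q$-ideal, splitting into the same three cases ($xI \neq 0$, $yI \neq 0$, and $xI = yI = 0$), and it uses $I \subseteq Q$ to keep the perturbed element outside $Q$ exactly as you do, so fixing the pair $(a,b)$ up front is just the direct form of that argument.
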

\begin{proof}
Assume that $I^2 \neq 0$. We show that $I$ is a $Q$-ideal. Let $xy \in I$ and $x \notin Q$. If $xy \neq 0$, then we get $y \in I$ as $I$ is a weakly $Q$-ideal of $A$. Let $xy =0$. If $xI \neq 0$, then there exists $a \in I$ such that $xa \neq 0$. Then, we have $0 \neq x(y+a) \in I$. This implies that $y+a \in I$ as $I$ is a weakly $Q$-ideal of $A$ and $x \notin Q$. So, we get $y \in I$. If $yI \neq 0$, then there exists $b \in I \subseteq Q$ such that $yb \neq 0$. Since $0 \neq yb =y(b+x)  \in I$ and  $b+x \notin Q$, we get $y \in I$. Let us assume that $Ix=Iy=0$. From $I^2 \neq 0$ it follows that $ab \neq 0$ for some $a,b \in I$. Since $0 \neq ab=(a+y)(b+x) \in I$ and $b+x \notin Q$, we get $a+y \in I$ and so $y \in I$. Consequently, $I$ is a $Q$-ideal of $A$.
\end{proof}
Now, we state a version of Nakayama$^,$s lemma.

\begin{corollary} \label{6}
Let   $I$ be a weakly $Q$-ideal of $A$ that is not $Q$-ideal. If $M$ is an $R$-module and $IM=M$, then $M=0$.
\end{corollary}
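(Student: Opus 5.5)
The plan is to combine Theorem \ref{5} with the observation that the hypothesis $IM=M$ can be iterated. Since $I$ is a weakly $Q$-ideal of $A$ that is not a $Q$-ideal, Theorem \ref{5} gives $I^2=0$. (Here the module $M$ is an $A$-module; the ``$R$'' in the statement is just the base ring $A$.)

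The key step is to apply the hypothesis twice. From $IM=M$ we get
\[
M=IM=I(IM)=I^2M.
\]
The middle equality substitutes $M=IM$, and the last one uses associativity of the module action: every element of $I(IM)$ is a finite sum of terms $a(bm)=(ab)m$ with $a,b\in I$ and $m\in M$, and conversely. Then $I^2=0$ gives $I^2M=0\cdot M=0$, so $M=0$.

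There is no real obstacle; the only point needing care is justifying $I(IM)=I^2M$, which is routine. Unlike the classical Nakayama lemma, no finite generation of $M$ is needed, because $I$ is nilpotent.
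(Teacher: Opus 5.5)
Your proposal is correct and is the paper's proof: invoke Theorem~\ref{5} to get $I^2=0$, then compute $M=IM=I^2M=0$. Your added justification of $I(IM)=I^2M$ and your reading of the ``$R$-module'' in the statement as an $A$-module are both appropriate.
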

\begin{proof}
Let $I$ is  a weakly $Q$-ideal of $A$ that is not $Q$-ideal. By  Theorem \ref{5}, we have $I^2=0$. Assume that  $IM=M$ for some $R$-module $M$. Then, we have $M=IM=I^2M=0$.
\end{proof}
Given   a proper ideal $I$ of $A$, we use $Z_I(A)$ to denote  the set $\{a \in A \mid as \in I \  \text{for some} \ s \in A \backslash I\}$.
\begin{theorem} \label{7}
Let $I$ and $Q$ be ideals of $A$ and $S$ be a multiplicative subset of $A$ with $Q \cap S=\varnothing$. 
\begin{enumerate}
\item If $I$ is a weakly $Q$-ideal of $A$, then $S^{-1}I$ is a weakly $S^{-1}Q$-ideal of $S^{-1}A$.
\item If $S^{-1}I$ is a weakly $S^{-1}Q$-ideal of $S^{-1}A$ and $S \cap Z(A)=S \cap Z_i=\varnothing$ for $i \in\{I,Q\}$, then $I$ is a weakly $Q$-ideal of $A$.
\end{enumerate} 
\end{theorem}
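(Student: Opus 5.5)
For (1), I first check that $S^{-1}I$ is proper. By Proposition \ref{1}, $I \subseteq Q$. Since $Q \cap S=\varnothing$, we get $I \cap S=\varnothing$, so $S^{-1}I \neq S^{-1}A$.

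Now let $0 \neq \frac{x}{s}\cdot\frac{y}{t} \in S^{-1}I$ with $\frac{x}{s} \notin S^{-1}Q$. Then $uxy \in I$ for some $u \in S$. I replace the pair $(x,y)$ by $(x, uy)$ and argue as follows.
\begin{itemize}
\item We have $x \notin Q$, since otherwise $\frac{x}{s} \in S^{-1}Q$.
\item We have $x(uy) \neq 0$, since otherwise $\frac{xy}{st} = \frac{uxy}{ust}=0$.
\end{itemize}
So $0 \neq x(uy) \in I$ with $x \notin Q$, and the weakly $Q$-ideal property gives $uy \in I$. Hence $\frac{y}{t}=\frac{uy}{ut} \in S^{-1}I$. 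This part is routine.

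For (2), I read the hypothesis as $S \cap Z(A)=S\cap Z_I(A)=S\cap Z_Q(A)=\varnothing$. The first step is to show $I$ is proper: if $1 \in I$ then $S^{-1}I=S^{-1}A$, contradicting properness.

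Next let $0\neq xy \in I$ with $x \notin Q$. Since $S \cap Z(A)=\varnothing$, elements of $S$ are non-zero-divisors, so $A \to S^{-1}A$ is injective and $\frac{xy}{1}\neq 0$ in $S^{-1}A$. I also need $\frac{x}{1}\notin S^{-1}Q$. If $\frac{x}{1} \in S^{-1}Q$, then $ux \in Q$ for some $u \in S$. Because $x \notin Q$, this means $u \in Z_Q(A)$, contradicting $S\cap Z_Q(A)=\varnothing$.

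Applying the hypothesis gives $\frac{y}{1} \in S^{-1}I$, so $vy \in I$ for some $v\in S$. If $y \notin I$, then $v\in Z_I(A)$, a contradiction, so $y \in I$.

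The main obstacle is the bookkeeping in (2), namely making sure the nonzero condition transfers to $S^{-1}A$. For this I rely on $S\cap Z(A)=\varnothing$ giving injectivity of localization. The rest is direct use of the $Z_I$ and $Z_Q$ conditions.
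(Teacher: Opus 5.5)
Your proof is correct and follows essentially the same route as the paper's. In (1) you clear denominators to get $0\neq x(uy)\in I$ with $x\notin Q$. In (2) you use $S\cap Z(A)=\varnothing$ to keep $\frac{x}{1}\frac{y}{1}\neq 0$, and the $Z_Q(A)$ and $Z_I(A)$ conditions to pull $\frac{x}{1}\notin S^{-1}Q$ and $\frac{y}{1}\in S^{-1}I$ back to $A$. Your extra properness checks fill in details the paper leaves implicit: for $S^{-1}I$ you use Proposition \ref{1} together with $Q\cap S=\varnothing$, and for $I$ you use $1\notin I$.
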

\begin{proof}
(1) Let $I$ be a weakly $Q$-ideal of $A$. Assume that $0 \neq \frac{x}{s_1}\frac{y}{s_2} \in S^{-1}I$ for  $\frac{x}{s_1},\frac{y}{s_2} \in S^{-1}A$ such that $\frac{x}{s_1} \notin Q$. Therefore, we have $0 \neq txy \in I$ for some $t \in S$. Since $I$ is a weakly $Q$-ideal of $A$ and $x \notin Q$, we get $ty \in I$. Then, $\frac{y}{s_2}=\frac{ty}{ts_2} \in S^{-1}I$. Consequently,  $S^{-1}I$ is a weakly $S^{-1}Q$-ideal of $S^{-1}A$.

(2) Let  $S^{-1}I$ be a weakly $S^{-1}Q$-ideal of $S^{-1}A$. Assume that $0 \neq xy \in I$ for $x,y \in A$ and $x \notin Q$. This implies that $\frac{x}{1}\frac{y}{1} \in S^{-1}I$. If $\frac{x}{1} \in S^{-1}Q$, then $rx \in Q$ for some $r \in S$. Since  $S \cap Z_Q(A)=\varnothing$, we have  $x \in Q$ and so $\frac{x}{1} \in S^{-1}Q$, a contradiction. Let $\frac{x}{1}\frac{y}{1}=0$. Then, there exists $t \in S$ such that $txy=0$. This means that $xy=0$ as $S \cap Z(A)=\varnothing$ which is a contradiction. Then, $0 \neq \frac{x}{1}\frac{y}{1} \in S^{-1}I$.   Since $S^{-1}I$ is a weakly $S^{-1}Q$-ideal of $S^{-1}A$ and  $\frac{x}{1} \notin  S^{-1}Q$, we conclude that  $\frac{y}{1} \in S^{-1}I$. Hence,  $uy \in I$ for some $u \in S$. From $S\cap Z_I(A)=\varnothing$ it follows that $y \in I$. Thus, $I$ is a weakly $Q$-ideal of $A$.
\end{proof}
\begin{theorem} \label{8}
Let $A_1$ and $A_2$ be two rings and let $I_i$ and $Q_i$ be ideals of $A_i$ for each $i \in \{1,2\}$ such that $I_1 \times I_2$ be a non-zero proper ideal of $A_1 \times A_2$. Then, the following statements are equivalent:
\begin{enumerate}
\item $I_1 \times I_2$ is a weakly $Q_1 \times Q_2$-ideal of $A_1 \times A_2$.
\item $I_1$ is a   $Q_1$-ideal of $A_1$ and $I_2=A_2$ or  $I_2$ is a   $Q_2$-ideal of $A_2$ and $I_1=A_1$.
\item $I_1 \times I_2$ is a   $Q_1 \times Q_2$-ideal of $A_1 \times A_2$.
\end{enumerate}
\end{theorem}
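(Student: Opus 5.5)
We prove the cycle (3) $\Rightarrow$ (1) $\Rightarrow$ (2) $\Rightarrow$ (3). The implication (3) $\Rightarrow$ (1) is immediate, since every $Q$-ideal is a weakly $Q$-ideal. Throughout, write $I=I_1\times I_2$ and $Q=Q_1\times Q_2$. By Proposition \ref{1}, (1) forces $I_1\subseteq Q_1$ and $I_2\subseteq Q_2$; we use this inclusion repeatedly.

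For (1) $\Rightarrow$ (2) we first show that one of the components is the whole ring. Suppose instead that $I_1\neq A_1$ and $I_2\neq A_2$. Since $I\neq 0$, we may assume there is some $0\neq a_1\in I_1$. Consider the product $(1,0)(a_1,1)=(a_1,0)$, which is a nonzero element of $I$.
\begin{itemize}
\item If $(1,0)\notin Q$, the weak $Q$-property gives $(a_1,1)\in I$, hence $1\in I_2$, a contradiction.
\item If $(1,0)\in Q$, then $Q_1=A_1$. In this case we test the symmetric product $(0,1)(1,b)$ with $b\in I_2$, or products such as $(a_1,0)(1,1)$, to reach a contradiction from the properness of $I_2$.
\end{itemize}
I expect this case analysis, where $Q_1$ or $Q_2$ equals the whole ring, to be the main obstacle. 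If $Q=A_1\times A_2$, every proper ideal is trivially a weakly $Q$-ideal, so the statement needs $Q$ to be proper (or $Q_1,Q_2$ proper) as an implicit hypothesis. I will make this assumption explicit at that point.

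Now, say $I_2=A_2$; the case $I_1=A_1$ is symmetric. Proposition \ref{1} gives $A_2=I_2\subseteq Q_2$, so $Q_2=A_2$. To see that $I_1$ is a $Q_1$-ideal, take $xy\in I_1$ with $x\notin Q_1$. Then
\[
(x,1)(y,1)=(xy,1)
\]
is a nonzero element of $I$, even when $xy=0$, and $(x,1)\notin Q$ because $x\notin Q_1$. The weak $Q$-property gives $(y,1)\in I$, so $y\in I_1$. Notably, no zero-product issue arises here.

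For (2) $\Rightarrow$ (3), suppose $I_1$ is a $Q_1$-ideal and $I_2=A_2$, using also $Q_2=A_2$ obtained above (which (2) should be read to include). Let $(x_1,x_2)(y_1,y_2)\in I$ with $(x_1,x_2)\notin Q_1\times A_2$. Then necessarily $x_1\notin Q_1$. Since $x_1y_1\in I_1$, the $Q_1$-ideal property gives $y_1\in I_1$, and therefore $(y_1,y_2)\in I_1\times A_2=I$.
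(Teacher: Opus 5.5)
The gap is in the first step of (1)$\Rightarrow$(2). Your second bullet, the case $(1,0)\in Q$ (i.e.\ $Q_1=A_1$), is never carried out, and the products you suggest do not work in general:
\begin{itemize}
\item The product $(0,1)(1,b)$ needs some $0\neq b\in I_2$. After normalizing to $0\neq a_1\in I_1$, you may well have $I_2=0$.
\item The product $(a_1,0)(1,1)$ yields nothing, because $(a_1,0)$ lies in $Q$ (as $Q_1=A_1$) and also in $I$.
\end{itemize}
The repair is to use the same product with the roles of the factors exchanged. Since $Q$ is proper and $Q_1=A_1$, you have $Q_2\neq A_2$, so $(a_1,1)\notin Q$. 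As $0\neq (a_1,1)(1,0)=(a_1,0)\in I$, the weak $Q$-property forces $(1,0)\in I$, contradicting $I_1\neq A_1$.

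This is the paper's product $(x,1)(1,y)$ with $y=0$. The paper uses only the orientation $(x,1)\notin Q$, which silently needs $Q_2\neq A_2$, so both orientations are genuinely required. Also, the right extra hypothesis is $Q_1\times Q_2\neq A_1\times A_2$, not ``$Q_1,Q_2$ both proper''. The latter, via Proposition~\ref{1}, would make $I_1,I_2$ proper and hence (1) unsatisfiable.

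The rest is correct, and your two caveats are genuine defects of the statement:
\begin{itemize}
\item $2\mathbb{Z}\times 2\mathbb{Z}$ with $Q=\mathbb{Z}\times\mathbb{Z}$ satisfies (1) and (3) but not (2).
\item $I_1=Q_1=2\mathbb{Z}$, $I_2=A_2=\mathbb{Z}$, $Q_2=0$ satisfies (2) as written, while $0\neq(0,1)(1,1)\in I$ violates (1).
\end{itemize}
The paper's proof of (2)$\Rightarrow$(3) tacitly assumes $Q_2=A_2$ when it passes from $(x,y)\notin Q_1\times Q_2$ to $x\notin Q_1$.

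Your proof that $I_1$ is a $Q_1$-ideal, via $(x,1)(y,1)=(xy,1)\neq 0$, differs from the paper's. The paper first uses Theorem~\ref{5} (since $I^2\supseteq 0\times A_2\neq 0$) to upgrade $I$ to a genuine $Q$-ideal, and then tests $(x,0)(y,0)$. Your route is more direct and avoids zero products entirely.
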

\begin{proof}
(1) $\Longrightarrow$ (2)  Let $I_1$ and $I_2$ be proper ideals of $A_1$ and $A_2$, respectively. Assume that $0 \neq (x,y) \in I_1 \times I_2$. This means that $0 \neq (x,1)(1,y) \in I_1 \times I_2$ such that $(x,1) \notin Q_1 \times Q_2$ and $(1,y) \notin I_1 \times I_2$. This contradicts the fact that $I_1 \times I_2$ is a   weakly $Q_1 \times Q_2$-ideal of $A_1 \times A_2$. Let us assume that $I_1$ is a proper ideal of $A_1$ and $I_2=A_2$. By corollary  \ref{6}, $I_1 \times I_2$ is a $Q_1\times Q_2$-ideal of $A_1 \times A_2$ as $(I_1 \times I_2)^2 \neq 0$. Let $xy \in I_1$ for $x,y \in A_1$ and $x \notin Q_1$. Since $(x,0)(y,0) \in I_1 \times I_2$ and $(x,0) \notin Q_1 \times Q_2$, we have $(y,0) \in I_1 \times I_2$ which implies $y \in I_1$. 

(2) $\Longrightarrow$ (3) Let us assume that $I_1$ is a   $Q_1$-ideal of $A_1$ and $I_2=A_2$. Let $(x,y)(w,z) \in I_1 \times I_2$ for $(x,y),(w,z) \in I_1 \times A_2$ and $(x,y) \notin Q_1 \times Q_2$. From $xw \in I_1$ it follows that $w \in I_1$ as $x \notin Q_1$. Therefore, we conclude that $(w,z) \in   I_1 \times A_1$. Thus, $I_1 \times I_2$ is a   $Q_1 \times Q_2$-ideal of $A_1 \times A_2$.

(3) $\Longrightarrow$ (1) It is obvious.
\end{proof}
Let   $M$ be an $A$-module where $A$ is  a commutative ring with identity $1$. Recall from \cite{Anderson} that   $A(+)M=\{(a,m) \mid a \in A, m \in M\}$ with  the  addition  $ (a, m) + (b, n) = (a + b, m + n) $ and the multiplication   $(a, m)(b, n) = (ab, an + bm)$  is a  ring with identity $(1, 0)$.  This commutative ring is called the idealization of  $M$. Assume that  $I$ is an ideal of $A$ and $N$ a submodule of $M$. Theorem 3.3 in \cite{Anderson} shows that  $I(+)N$ is an ideal of $A(+)M$ if and only if $IM \subseteq N$.

 Next, we determine the relation between weakly $Q$-ideals of $A$ and weakly $Q(+)M$-ideals of $A(+)M$. 
 \begin{theorem} \label{9}
 Let $I$ and $Q$ be proper  ideals of $A$ and let $N$ be a submodule of an $A$-module $M$. If $I(+)N$ is a weakly $Q(+)M$-ideal of  $A(+)M$, then $I$ is a weakly $Q$-ideal of $A$. 
 \end{theorem}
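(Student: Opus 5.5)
The plan is to check the definition of a weakly $Q$-ideal directly, by lifting a product from $A$ to $A(+)M$ using elements whose module component is $0$. Fix $x,y \in A$ with $0 \neq xy \in I$ and $x \notin Q$. We must show $y \in I$.

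First, note that $I$ is proper by hypothesis. Also, since $I(+)N$ is assumed to be an ideal of $A(+)M$, we have $IM \subseteq N$ by Theorem 3.3 in \cite{Anderson}. Now consider the elements $(x,0)$ and $(y,0)$ of $A(+)M$. Their product is
\[
(x,0)(y,0)=(xy,\, x\cdot 0+y\cdot 0)=(xy,0).
\]
Since $xy \in I$ and $0 \in N$, we have $(xy,0) \in I(+)N$. Since $xy \neq 0$, this product is nonzero in $A(+)M$. Finally, $(x,0) \notin Q(+)M$, because $x \notin Q$, and membership in $Q(+)M$ only constrains the first coordinate.

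We can therefore apply the hypothesis that $I(+)N$ is a weakly $Q(+)M$-ideal: from $0 \neq (x,0)(y,0) \in I(+)N$ and $(x,0) \notin Q(+)M$ we get $(y,0) \in I(+)N$, and hence $y \in I$. This is exactly the defining condition, so $I$ is a weakly $Q$-ideal of $A$.

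There is no substantial obstacle. The only points needing care are that the nonzero condition transfers correctly (it does, because the first coordinate of the product is $xy\neq 0$), and that non-membership in $Q(+)M$ is read off the first coordinate alone, which holds because the second component of $Q(+)M$ is all of $M$.
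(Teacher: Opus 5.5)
Your proposal is correct and is essentially the paper's argument: lift $x,y$ to $(x,0),(y,0)$, observe that $(0,0)\neq(xy,0)\in I(+)N$ with $(x,0)\notin Q(+)M$, and read off $y\in I$ from $(y,0)\in I(+)N$. The remark that $IM\subseteq N$ is harmless but not needed.
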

 \begin{proof}
 Assume that $0 \neq xy \in I$ for $x,y \in A$ such that $x \notin Q$. This implies that $(0,0) \neq (x,0)(y,0) \in I(+) N$ but $(x,0) \notin Q(+)M$. Since $I(+)N$ is a weakly $Q(+)M$-ideal of  $A(+)M$, we get $(y,0) \in I(+)N$ and so $y \in I$. Hence, $I$ is a weakly $Q$-ideal of $A$.
 \end{proof}
 %The following example verifies that the converse of Theorem \ref{9} may not be true. 
%  \begin{example}
  %The ideal $0(+)\langle  6 \rangle$ is not a weakly $0(+)\langle  3 \rangle$-ideal of $\mathbb{Z}(+)\mathbb{Z}$ since $(0,0) \neq (3,3)(0,2) \in 0(+)\langle  6 \rangle$  and $(3,3),(0,2) \notin 0(+)\langle  3 \rangle$ but $(0,2),(3,3) \notin 0(+)\langle  6 \rangle$. However, $0$ is a weakly $
  %\end{example}
  \begin{theorem} \label{10}
 Let $I$ and $Q$ be proper  ideals of $A$ and let $N$ be a submodule of an $A$-module $M$. Then, the following statements are equivalent:
\begin{enumerate}
\item $I(+)M$ is a weakly $Q(+)M$-ideal of $R(+)M$.
\item   $I$ is a weakly $Q$-ideal of $A$ and if $a, b \in A$ with $ab = 0$ but $a \notin Q$ and $b \notin I$, then $a, b \in \operatorname{Ann}(M)$.
 \end{enumerate}
 \end{theorem}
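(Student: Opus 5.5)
The plan is to prove both implications directly from the definition, using the multiplication $(a,m)(b,n)=(ab,an+bm)$ in $A(+)M$. Two facts from the definitions drive the argument: $(a,m)\in Q(+)M$ if and only if $a\in Q$, and $(b,n)\in I(+)M$ if and only if $b\in I$. Also, $I(+)M$ is an ideal since $IM\subseteq M$, and it is proper because $I$ is proper.

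\emph{(1) $\Longrightarrow$ (2).} That $I$ is a weakly $Q$-ideal of $A$ follows at once from Theorem \ref{9} with $N=M$. Now let $a,b\in A$ with $ab=0$, $a\notin Q$ and $b\notin I$.
\begin{itemize}
\item To show $a\in\operatorname{Ann}(M)$, suppose $am\neq 0$ for some $m\in M$. Then $(a,0)(b,m)=(0,am)$ is a nonzero element of $I(+)M$, and $(a,0)\notin Q(+)M$. Hence $(b,m)\in I(+)M$, so $b\in I$, a contradiction. Thus $aM=0$.
\item Symmetrically, suppose $bm\neq 0$ for some $m\in M$. Consider $(a,m)(b,0)=(0,bm)\neq 0$, which lies in $I(+)M$. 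Since $a\notin Q$, we have $(a,m)\notin Q(+)M$. Therefore $(b,0)\in I(+)M$, giving $b\in I$, again a contradiction. Thus $bM=0$.
\end{itemize}

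\emph{(2) $\Longrightarrow$ (1).} Let $(0,0)\neq(a,m)(b,n)=(ab,an+bm)\in I(+)M$ with $(a,m)\notin Q(+)M$, so $a\notin Q$. We must show $b\in I$.
\begin{itemize}
\item If $ab\neq 0$, then $0\neq ab\in I$ with $a\notin Q$, and the weakly $Q$-property of $I$ gives $b\in I$.
\item If $ab=0$ and $b\notin I$, the hypothesis yields $a,b\in\operatorname{Ann}(M)$. Then $an+bm=0$, so the product is $(0,0)$, contradicting the assumption. Hence $b\in I$.
\end{itemize}
Therefore $(b,n)\in I(+)M$, and $I(+)M$ is a weakly $Q(+)M$-ideal.

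The only step needing care is the choice of test elements in (1) $\Longrightarrow$ (2). One must place the nonzero module component so that the product has zero ring part but nonzero module part. The factor outside $Q(+)M$ must always have $a$ as its ring component, while the other factor has $b$ as its ring component. The remaining steps are routine.
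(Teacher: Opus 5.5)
Your proof is correct and follows the same route as the paper. For (1) $\Longrightarrow$ (2) you use Theorem \ref{9} together with the test product $(a,0)(b,m)=(0,am)$, and for (2) $\Longrightarrow$ (1) you split on whether $ab=0$. You also write out the case $b\in\operatorname{Ann}(M)$ via $(a,m)(b,0)=(0,bm)$, which the paper leaves as ``a similar argument''; keeping $a$ as the ring component of the factor outside $Q(+)M$ is exactly what makes that case work.
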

 \begin{proof}
 (1) $\Longrightarrow$ (2) Assume that $I(+)M$ is a weakly $Q(+)M$-ideal of $R(+)M$. By Theorem \ref{9}, $I$ is a a weakly $Q$-ideal of $A$. Let $ab=0$ for $a,b \in A$ such that neither $a \in Q$ nor $b \in I$. If $a \notin Ann(M)$, then $am \neq 0$ for some $m \in M$. Then, we have $(0,0) \neq (a,0)(b,m) \in I (+) M$ such that $(a,0) \notin Q(+)M$ and $(b,m) \notin I(+)M$, a contradiction. Hence, $a \in Ann(M)$. By a similar argument, we conclude that $b \in Ann(M)$.
 
  (2) $\Longrightarrow$ (1) Let $(0,0) \neq (x,m_1)(y,m_2) \in I(+)M$ for $ (x,m_1),(y,m_2) \in  A(+)M$ but $(x,m_1) \notin Q(+)M$. This implies that $  xy \in I$ and $x \notin Q$. Let $xy=0$ such that $x \notin Q$ and $y \notin I$. Therefore, we have $x,y \in Ann(M)$ by the hypothesis which means $(x,m_1)(y,m_2)=(0,0)$ which is impossible. Then, $xy \neq 0$. Since $I$ is a $Q$-ideal of $A$ and $x \notin Q$, we have $y \in I$ which implies $(y,m_2) \in I(+)M$. Thus, $I(+)M$ is a weakly $Q(+)M$-ideal of $R(+)M$.
 \end{proof}
%%%%%%%%%%%%%%%%%%%%%%%%%%%%%%%%%%%%%%%%%%%
%%%%%%%%%%%%%%%%%%%%%%%%%

\end{document}